\def\RR{{\mathbb{R}}}
\def\NN{{\mathbb{N}}}
\DeclareMathOperator*{\argmin}{arg\,min}
\newcommand{\Energy}{\mathfrak{J}}
\theoremstyle{plain}
\newtheorem{theorem}{Theorem}[section]
\theoremstyle{definition}
\newtheorem{definition}[theorem]{Definition}
\newtheorem*{example}{Example}
\newtheorem{condition}{Condition}[section]
\theoremstyle{remark}
\let\oldtocsection=\tocsection
\let\oldtocsubsection=\tocsubsection
\let\oldtocsubsubsection=\tocsubsubsection
\renewcommand{\tocsection}[2]{\hspace{0em}\oldtocsection{#1}{#2}}
\renewcommand{\tocsubsection}[2]{\hspace{1em}\oldtocsubsection{#1}{#2}}
\renewcommand{\tocsubsubsection}[2]{\hspace{2em}\oldtocsubsubsection{#1}{#2}}
\providecommand\@dotsep{5}
\renewcommand{\listoftodos}[1][\@todonotes@todolistname]{%
	\@starttoc{tdo}{#1}}
\begin{document}

\title{The Aubin--Nitsche Trick for Semilinear Problems}

\author[Hardering]{Hanne Hardering}
\address{Hanne Hardering\\
Technische Universit\"at Dresden\\
Institut f\"ur Numerische Mathematik\\
D-01062 Dresden\\
Germany}
\email{hanne.hardering@tu-dresden.de}

\maketitle


\begin{abstract}
The Aubin--Nitsche trick is a common tool to show $L^2$-error estimates for discretizations of $H^1$-elliptic linear partial differential equations arising for example as Euler--Lagrange equations of a quadratic energy functional \cite{ciarlet}.
The technique itself is linear: for quasilinear problems it is not applicable. We generalize the Aubin--Nitsche trick to a class of minimization problems closely related to semi-linear partial differential equations.
\end{abstract}

In textbooks on the numerical analysis of partial differential equations \cite{ciarlet,braess}, the Aubin--Nitsche trick is usually presented after establishing discretization error bounds in $O(h^{m})$ for the minimization of $H^1$-elliptic energies by $m$-th order Lagrangian finite elements, where $h$ is a mesh width parameter.
While by Poincar\'e's inequality one automatically obtains $L^2$-error estimates in $O(h^{m})$, this is not optimal as the $L^2$-interpolation error is of the better order $O(h^{m+1})$. The Aubin--Nitsche trick is then introduced as a tool to obtain optimal $L^2$-error estimates from the $H^1$-error estimates under mild additional regularity assumptions.

In these arguments the energy is quadratic, i.e., of the type $\Energy(v)=\frac{1}{2}a(v,v)+f(v)$, where $a(\cdot,\cdot):H\times H\to \RR$ denotes an $H$-elliptic scalar product on some subspace $H\subset H^1(\Omega)$, and $f\in H'$ is a linear map. The $H^1$-error estimates can be generalized to nonlinear problems \cite[Ch.\,5]{ciarlet}. The Aubin--Nitsche trick, however, relies on the linear concept of Galerkin orthogonality. To obtain optimal $L^2$-error estimates for nonlinear problems one option is to deform the nonlinear problem to a linear one and use a method of continuity argument \cite{Rannacher}. In the case of only mildly nonlinear problems, in particular semi-linear ones, this technique is not needed. Instead, we propose a new proof that replaces addition by integration in the nonlinear setting and estimate additional terms.

Linearity and semi-linearity are concepts that refer to the Euler--Lagrange equations associated to energy problems obtained by setting the first variation of the energy to zero. In this context, it is more feasible to work with properties of the energy directly. The concept of ``mildly nonlinear'' we will use is a bound on the third variation of the energy. We will call such energies predominantly quadratic. One example is $\Energy(v)\colonequals \int_{\Omega}|Dv(x)|^{2}\;dx+ \int_{\Omega}\psi(u(x))\;dx$, where $\psi$ denotes a nonlinear potential.

\tableofcontents

\section{The Aubin--Nitsche Trick for Quadratic Energies}

We will briefly summarize the basic tools we need from standard theory and the recall the Aubin--Nitsche-Trick for quadratic energies.\\
In the following $\Omega\subset \RR^{d}$ will denote an open subset with piecewise Lip\-schitz boundary $\partial\Omega$. Further $W^{k,p}(\Omega,\RR^{n})$ will denote the standard Sobolev space with the usual abbreviation $H^{k}(\Omega,\RR^{n})\colonequals W^{k,2}(\Omega,\RR^{n})$ \cite{wloka}.

\subsection{$H^{1}$-Ellipticity and $H^{1}$-Discretization Error Bounds}
We consider the minimization of energies $\Energy$ in $H\subset W_{\phi}^{1,2}(\Omega,\RR^{n})$,
where $\phi$ denotes suitable boundary data: 
\begin{align}\label{eq:Pcont}
u\in H:\qquad \Energy(u)\leq \Energy(v)\qquad \forall v\in H.
\end{align}
To bound the error of discrete approximations to minimizers of $\Energy$, we need the concept of $W^{1,2}$-ellipticity.
\begin{definition}\label{def:ellipticEnergy}
	Let $\Energy:H \to \RR$ be twice continuously differentiable, and let $\delta^{2}\Energy$ denote the second variation of $\Energy$.
	We say that $\Energy$ is
	\begin{enumerate}[label=(\alph*)]
		\item \label{def:coercive} 
		$W^{1,2}$-coercive, if
		there exists a constant $\lambda>0$ such that for all $v\in H$ and $V\in W_{0}^{1,2}(\Omega, \RR^{n})$ we have
		\begin{align}\label{eq:ellipticbelow}
		\lambda\|V\|^2_{W^{1,2}}\leq \delta^{2}\Energy(v)(V,V),
		\end{align}
		\item \label{def:bounded} 
		$W^{1,2}$-bounded, if there exists a constant $\Lambda>0$ such that for all $v\in H$ and
		for all $V,W \in W_{0}^{1,2}(\Omega,\RR^{n})$
		we have
		\begin{align}\label{eq:ellipticabove}
		\left|\delta^{2}\Energy(v)(V,W)\right|\leq \Lambda\;\|V\|_{W^{1,2}}\|W\|_{W^{1,2}},
		\end{align}
		\item  $W^{1,2}$-elliptic, if \ref{def:coercive} and \ref{def:bounded} hold.
	\end{enumerate}
\end{definition}
In order to obtain a finite-dimensional approximation of $H$, we assume that we have a conforming grid $G$ on $\Omega$, i.e., a partition into polytopes such that the closures intersect in common faces. 
\begin{definition}\label{def:widthh}
	We say that a conforming grid $G$ for the domain 
	$\Omega\subset 
	\RR^{d}$ is of width $h$ and order $m$, if for each element $T_{h}$ of $G$ there exists a 
	$C^{\infty}$-diffeomorphism $F_{h}:T_{h}\to T$ to a reference element  $T\subset \RR^{d}$ that scales with $h$ of order $m$, i.e.,
	\begin{align*}
	c\;h^{-d}\leq \|\det (DF_{h})\|_{L^{\infty}}&\leq C\; h^{-d}, & \|\partial_{\alpha}F_{h}\|_{L^{\infty}}&\leq C\; h \quad \forall \alpha=1,\ldots,d,
	\end{align*}
	and
	\begin{align*}
	|F^{-1}|_{W^{k,\infty}}\leq C\; h^{k}\quad \forall k=0,\ldots,m.
	\end{align*}
\end{definition}
Let $S^{m}_{h;\phi}\subset H\cap C(\Omega,\RR^{n})$ be a finite-dimensional approximation space for a grid $G$ on $\Omega$ of width $h$ and order $m$.
Note that this requires that the boundary data $\phi$ can be represented exactly in $S_{h;\phi}^{m}$. This requirement may be waived and replaced by a standard approximation argument for boundary data \cite[Ch. 4]{ciarlet}.

Consider the discrete approximation of~\eqref{eq:Pcont}
\begin{align}\label{eq:Pdisc}
u_{h}\in S^{m}_{h;\phi}:\qquad \Energy(u_{h})\leq \Energy(v_{h})\qquad \forall v_{h}\in S_{h;\phi}^{m}.
\end{align}
In order to control the error between $u$ and $u_{h}$, we need standard approximation conditions for the discrete space $S_{h;\phi}^{m}$.\\
The first condition consists of an estimate for the best approximation error in $S_{h;\phi}^{m}$ \cite{ciarlet}.
\begin{condition}\label{cond:1}
	Let $kp>d$, $m\geq k-1$, and $u\in W^{k,p}_{\phi}(\Omega,\RR^{n})$. For small enough $h$ let there exist a map $u_{I}\in S_{h;\phi}^{m}$ and constants $\Cl{c:m+1Deriv}, \Cl{c:cond1} $ with 
	\begin{align}\label{eq:cond1b}
	|u_{I}|_{W^{l,q}(\Omega,\RR^{n})}\leq \Cr{c:m+1Deriv}\;	|u|_{W^{l,q}(\Omega,\RR^{n})}
	\end{align}
	for all $k-\frac{d}{p}\leq l\leq k$ and $q\leq \frac{pd}{d-p(k-l)}$ such that $u_{I}$ fulfills
	on each element $T_{h}\in G$ the estimate
	\begin{align}\label{eq:cond1a}
	\|u-u_{I}\|_{L^{p}} + h\;|u-u_{I}|_{W^{1,p}} \leq
	\Cr{c:cond1}\;h^{k}\;|u|_{W^{k,p}}.
	\end{align}
\end{condition}
The second condition is generally known as an inverse estimate.
\begin{condition}\label{cond:inv}
	On a grid $G$ of width $h$ and order $m$, under the additional assumption that $F^{-1}_{h}:T\to T_{h}$ scales with order $2$ for every $T_{h}\in G$, let there exist a constant $\Cl{c:inverseEst}$, such that		
	\begin{align}\label{eq:inverse}
	\|v_{h}\|_{W^{1,p}(T_{h},\RR^{n})}\leq \Cr{c:inverseEst}\;h^{-d\max\{0,\frac{1}{q}-\frac{1}{p}\}}\|v_{h}\|_{W^{1,q}(T_{h},\RR^{n})}
	\end{align}
	for any $v_{h}\in S^{m}_{h}(T_{h},M)$ and for any $p,q\in[1,\infty]$.
\end{condition}
Note that the discrete functions in $S_{h;\phi}^{m}$ are globally only of $C^{0}\cap W^{1,2}$-smoothness.
Whenever we consider higher Sobolev norms, we implicitly define them as grid dependent, i.e.,
	\begin{align}
	|u|_{W^{k,p}(\Omega,\RR^{n})}\colonequals \left(\sum_{T_{h}\in G} |u|_{W^{k,p}(T_{h},\RR^{n})}^{p}\right)^{\frac{1}{p}}.
	\end{align}
By summation over the elements of $G$, estimates like~\eqref{eq:cond1a} and~\eqref{eq:inverse} carry over to global grid-dependent norms.
For $H^{1}$-elliptic energies, C\'ea's Lemma with Condition~\ref{cond:1} yields the following $W^{1,2}$-error estimate \cite{ciarlet}.
\begin{theorem}\label{T:H1err}
	Let $2(m+1)>d$, and $m\geq 1$. Assume that $u\in W_{\phi}^{m+1,2}(\Omega,M)$ is a minimizer of an $H^{1}$-elliptic $\Energy:H\to \RR$.
	Then the discrete minimizer
	\begin{align*}
	u_{h}\colonequals \argmin_{v_{h}\in S_{h}^{m}} \Energy(v_{h})
	\end{align*}
	fulfills the a priori error estimate
	\begin{align}\label{eq:H1err}
	\|u-u_{h}\|_{W^{1,2}}\leq \Cl{c:H1err} h^{m}|u|_{W^{m+1,2}}.
	\end{align}
\end{theorem}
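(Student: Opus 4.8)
The plan is to prove the estimate exactly along the lines indicated in the preceding remark: establish a C\'ea-type quasi-best-approximation bound
\[
\|u-u_h\|_{W^{1,2}} \leq \frac{\Lambda}{\lambda}\,\|u-u_I\|_{W^{1,2}},
\]
and then control the right-hand side by the interpolation estimate of Condition~\ref{cond:1}. Since $\Energy$ need not be quadratic, there is no bilinear form to exploit; instead the role of the bilinear form is played by the second variation $\delta^2\Energy$, integrated along a segment via the fundamental theorem of calculus. The two structural facts driving the argument are the Euler--Lagrange equations $\delta\Energy(u)(V)=0$ for all $V\in W_0^{1,2}(\Omega,\RR^n)$ and $\delta\Energy(u_h)(V_h)=0$ for all $V_h\in S_h^m$, which hold because $u$ and $u_h$ minimize $\Energy$ over $H$ and over the conforming affine subspace $S_{h;\phi}^m\subset H$, respectively.

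First I set $e\colonequals u-u_h$; since $u$ and $u_h$ carry the same boundary data $\phi$, we have $e\in W_0^{1,2}(\Omega,\RR^n)$, so the coercivity~\eqref{eq:ellipticbelow} applies with $V=e$. Writing the difference of first variations as an integral of the second,
\[
\delta\Energy(u)(e)-\delta\Energy(u_h)(e)=\int_0^1 \delta^2\Energy(u_h+te)(e,e)\,dt,
\]
the left-hand side equals $-\delta\Energy(u_h)(e)$ by the continuous Euler--Lagrange equation. Decomposing $e=(u-u_I)+(u_I-u_h)$ with the interpolant $u_I\in S_{h;\phi}^m$ from Condition~\ref{cond:1}, the discrete Euler--Lagrange equation annihilates $\delta\Energy(u_h)(u_I-u_h)$ because $u_I-u_h\in S_h^m$, leaving only $\delta\Energy(u_h)(u-u_I)$. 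Subtracting the vanishing term $\delta\Energy(u)(u-u_I)$ (admissible since $u-u_I\in W_0^{1,2}(\Omega,\RR^n)$) and integrating once more lets me rewrite this remaining term as $\int_0^1 \delta^2\Energy(u+s(u_h-u))(u_h-u,\,u-u_I)\,ds$. Combining coercivity on the left with boundedness~\eqref{eq:ellipticabove} on the right then yields
\[
\lambda\|e\|_{W^{1,2}}^2 \leq \Lambda\,\|e\|_{W^{1,2}}\,\|u-u_I\|_{W^{1,2}},
\]
and dividing by $\|e\|_{W^{1,2}}$ produces the C\'ea estimate above.

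It then remains to insert the interpolation bound. The hypotheses $2(m+1)>d$ and $m\geq 1$ are precisely the instance $k=m+1$, $p=2$ of Condition~\ref{cond:1}, so that $kp>d$ guarantees the embedding into $C^0$ and the well-definedness of $u_I$, giving
\[
\|u-u_I\|_{L^2}+h\,|u-u_I|_{W^{1,2}}\leq C\,h^{m+1}\,|u|_{W^{m+1,2}}.
\]
Hence $|u-u_I|_{W^{1,2}}\leq C\,h^{m}|u|_{W^{m+1,2}}$ while $\|u-u_I\|_{L^2}\leq C\,h^{m+1}|u|_{W^{m+1,2}}$, so the full norm obeys $\|u-u_I\|_{W^{1,2}}\leq C\,h^{m}|u|_{W^{m+1,2}}$; chaining this with the C\'ea estimate proves~\eqref{eq:H1err}.

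The step I expect to require the most care is the passage from the differences of first variations to the integrated second variation, together with the bookkeeping of function spaces: one must verify that every increment on which $\delta^2\Energy$ is evaluated is admissible (i.e.\ lies in $W_0^{1,2}(\Omega,\RR^n)$) and that the base points $u_h+te$ and $u+s(u_h-u)$ remain in $H$, so that the constants $\lambda,\Lambda$ of Definition~\ref{def:ellipticEnergy} — which are assumed \emph{uniform} in the base point — legitimately apply inside the integrals. This base-point uniformity of the ellipticity bounds is exactly what compensates for the absence of a fixed bilinear form and makes the otherwise classical C\'ea argument go through in the nonlinear energy setting.
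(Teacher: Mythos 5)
Your argument is correct and is precisely the nonlinear C\'ea's lemma that the paper invokes without proof (citing \cite{ciarlet}): both variational characterizations, the fundamental-theorem-of-calculus replacement of Galerkin orthogonality, the uniform-in-base-point ellipticity constants, and Condition~\ref{cond:1} with $k=m+1$, $p=2$ are used exactly as intended. The only blemish is a harmless sign slip (the term left after the decomposition is $-\delta\Energy(u_h)(u-u_I)$, not $+\delta\Energy(u_h)(u-u_I)$), which is absorbed when you pass to the absolute-value bound $\left|\delta^{2}\Energy(v)(V,W)\right|\leq \Lambda\|V\|_{W^{1,2}}\|W\|_{W^{1,2}}$.
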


\subsection{The Aubin--Nitsche Trick For Quadratic Energies}\label{sec:lin}
The purpose of the Aubin--Nitsche trick is to show that for $W^{1,2}$-elliptic minimization problems the $L^{2}$-discretization error is in $O(h^{m+1})$. 

We recall the Aubin--Nitsche lemma for the approximation of a quadratic minimization problem in $H=H^{1}_{0}(\Omega,\RR)$ 
by Lagrangian finite elements.
For an elliptic bilinear form $a(\cdot,\cdot)$ and given $f\in H^{-1}$
consider the energy $J(v)=\frac{1}{2}a(v,v) - (f,v)$, the variational equalities
\begin{alignat}{5}
 u&\in H: &\quad a(u,v)&=(f,v) \quad &\forall v&\in H,\label{eq:linP}\\
 u_{h}&\in S_{h;0}^{m}: &\quad a(u_{h},v_{h})&=(f,v_{h}) \quad &\forall v_{h}&\in S_{h;0}^{m},\label{eq:linPh}
\end{alignat}
and the adjoint problem
\begin{align}\label{eq:adjproblin}
w\in H:\qquad a(v,w)&=(g,v)\qquad \forall v\in H ,
\end{align}
where $g\colonequals u-u_{h}$.
We assume $H^2$-regularity of the adjoint problem, i.e., $|w|_{H^2}\leq C\|g\|_{L^2}$.
The subtraction of equations~\eqref{eq:linP} and~\eqref{eq:linPh} with the same test function $v_{h}\in S_{h;0}^{m}\subset H$ yields the concept of Galerkin orthogonality, i.e.,
\begin{align}\label{eq:galerkinLin}
a(u-u_{h},v_{h})=0\qquad \forall v_{h}\in S_{h;0}^{m}.
\end{align}
Using Galerkin orthogonality and the $H^1$-ellipticity of $a(\cdot,\cdot)$, we can then estimate
\begin{align*}
\|u-u_{h}\|^{2}_{L^2}&=(g,u-u_{h})=a(u-u_{h},w)= a(u-u_{h},w-w_{I})\\
&\leq \Lambda\|u-u_{h}\|_{H^{1}}\|w-w_{I}\|_{H^{1}}\\
&\leq C h^{m} |u|_{H^{k}}\; h\;|w|_{H^{2}}\\
&\leq C h^{m+1} |u|_{H^2} \|u-u_{h}\|_{L^2}.
\end{align*}
Galerkin orthogonality $a(u-u_{h},w_{I})=0$ is the essential tool used here. It allows to incorporate an approximation of $w$ and thus leads to a better order estimate than the $H^1$-error. It is at first glance a purely linear concept that is verified by adding the equations~\eqref{eq:linP} and~\eqref{eq:linPh} for the same discrete test function, a technique that does not work for nonlinear energies. We will circumvent the need for addition by integration and show that semi-linearity of the Euler--Lagrange equation will then be sufficient to obtain equivalent error estimates.

\section{Semi-linearity and Predominantly Quadratic Energies}
We now introduce the concept of predominantly quadratic energies. Energies with semi-linear Euler--Lagrange equations fall in this category. At the same time, this property is exactly what we need for the $L^2$-error bounds.
Let the energy functional $\Energy:W^{1,2}(\Omega,\RR^{n})\to \RR$ be given by
\begin{align*}
\Energy(v)\colonequals \int_{\Omega}L(Dv,v,x)\;dx,
\end{align*}
where  $L:\RR^{n\times d}\times \RR^{n}\times \Omega\to \RR$, $(p,z,x)\mapsto L(p,z,x)$, is a smooth Lagrangian, and $D$ denotes the (weak) differentiation operator of a function from $\Omega\subset \RR^{d}$ to $\RR^{n}$.

We calculate the first variation of $\Energy$ at a function $u\in W^{1,2}(\Omega,\RR^{n}) $ in direction $V\in W^{1,2}_{0}(\Omega,\RR^{n})$:
\begin{align*}
	\delta\Energy(u)(V)&=\frac{d}{d\tau}\Big|_{\tau=0}\Energy(u+\tau V)\\
	&= \int_{\Omega}\partial_{p}L(Du,u,x)\cdot DV + \partial_{z}L(Du,u,x)\cdot V\;dx\\
	&=\int_{\Omega}\left(- D_{x} \partial_{p}L(Du,u,x)+ \partial_{z}L(Du,u,x)\right)\cdot V \;dx , 
\end{align*}
where $\partial_{p}, \partial_{z}, \partial_{x}$ denote partial differentiation of $L$ with respect to the corresponding variables.
Setting the variation to zero yields the corresponding system of Euler--Lagrange equations (cf. \cite{Evans})
\begin{align}\label{eq:eulerlagrange}
-\sum_{i=1}^{d}\frac{d}{dx_{i}}\left(\partial_{p_{i}^{k}}L(Du,u,x)\right) + \partial_{z^{k}}L(Du,u,x)=0\qquad \textrm{in}\ \Omega\ (k=1,\ldots,n).
\end{align}
On the other hand a semi-linear system of partial differential equations
\begin{align*}
a_{ij}(x)D^{ij}u(x) + a_{0}(Du(x),u(x), x)= 0
\end{align*}
is characterized by the independence of the coefficients $a_{ij}$ of the solution $u$.
Thus, the Euler-Lagrange equation \eqref{eq:eulerlagrange} is semi-linear if the operator $\partial_{p}^{2}L$ is independent of $u$, i.e.,
\begin{align*}
	\partial_{p}^{2}L(Du,u,x)=\partial_{p}^{2}L(x), 
\end{align*}
and accordingly
\begin{align*}
\partial_{p}^{3}L(Du,u,x)=0, \quad\textrm{and}\quad \partial_{z} \partial_{p}^{2}L(Du,u,x)=0.
\end{align*}
These vanishing third order derivatives appear in the third variation of $\Energy$.
In general, the second variation of $\Energy$ reads
\begin{multline*}
\delta^{2}\Energy(u)(V,W)
= \int_{\Omega}\partial_{p}^{2}L(Du,u,x)(DV,DW) +  \partial_{z}\partial_{p}L(Du,u,x)(DV,W)\\
 + \partial_{p}\partial_{z}L(Du,u,x)(V,DW) + \partial_{z}^{2}L(Du,u,x)(V,W)\;dx,
\end{multline*}
and the third variation is
\begin{align*}
\delta^{3}\Energy(u)(V,W,U)
&=\int_{\Omega}\partial_{p}^{3}L(Du,u,x)(DV,DW,DU) +\partial_{z}\partial_{p}^{2}L(Du,u,x)(DV,DW,U)\\
&\quad  +  \partial_{p}\partial_{z}\partial_{p}L(Du,u,x)(DV,W,DU)+\partial_{z}^{2}\partial_{p}L(Du,u,x)(DV,W,U)\\
&\quad+ \partial^{2}_{p}\partial_{z}L(Du,u,x)(V,DW,DU)+ \partial_{z}\partial_{p}\partial_{z}L(Du,u,x)(V,DW,U)\\
&\quad + \partial_{p}\partial_{z}^{2}L(Du,u,x)(V,W,DU) + \partial_{z}^{3}L(Du,u,x)(V,W,U)\;dx.
\end{align*}
If \eqref{eq:eulerlagrange} is semi-linear, the third variation reduces to
\begin{align}\label{eq:thirdvarsemilin}
\delta^{3}\Energy(u)(V,W,U)
&=\int_{\Omega} \partial_{z}^{2}\partial_{p}L(Du,u,x)(DV,W,U) + \partial_{z}\partial_{p}\partial_{z}L(Du,u,x)(V,DW,U)\\
&\quad + \partial_{p}\partial_{z}^{2}L(Du,u,x)(V,W,DU) + \partial_{z}^{3}L(Du,u,x)(V,W,U)\;dx.\notag
\end{align}
Thus, semi-linearity of the Euler--Lagrange equation necessarily implies the dependence of each term of the third variation on at most one direction gradient.

We turn this observation into a definition.
More generally, we will consider predominantly quadratic energies. By this we mean the following:

\begin{definition}\label{def:almostLinear}
	Let $q>\max\{d,2\}$ and $\Energy:H \to \RR$ be an energy functional.
	We say that $\Energy$ is predominantly quadratic with respect to $q$ if $\Energy$ is $C^{3}$, 
	and for any $u\in H \cap W^{1,q}(\Omega,\RR^{n})$, $U\in W^{2,2}(\Omega,\RR^{n})$, and $V\in W^{1,2}\cap W^{o,r}(\Omega,\RR^{n})$ with either $(o,r)=(1,2)$, or $o=0$ and $r\leq d$, there exists a constant $\Cl{c:almostLinear}$ possibly depending on $\|u\|_{W^{1,q}}$ such that
	\begin{align}\label{eq:almostLinear}
		|\delta^{3}\Energy(u)(U,V,V)|\leq \Cr{c:almostLinear}\|U\|_{W^{2,2}}\|V\|_{W^{1,2}}
		\|V\|_{W^{o,r}}.
	\end{align}
\end{definition}
\begin{example}
	We have seen in \eqref{eq:thirdvarsemilin} that as long as the Lagrangian $L$ is smooth enough and its third variations are bounded in $L^{q}$ in terms of $\|u\|_{W^{1,q}}$, 
	the leading term of the third variation of the corresponding energy will have a bound of the form
	\begin{align*}
		|\delta^{3}\Energy(u)(U,V,V)|\leq C\;\left(\int_{\Omega}\left(|DV||V||U| + |V|^{2}|DU|\right)^{\frac{q}{q-1}}\;dx\right)^{1-\frac{1}{q}},
	\end{align*}
	if we assume semi-linearity of the Euler--Lagrange system.
	Thus, such an energy is predominantly quadratic (H\"older's inequality).
	\end{example}
	
	\begin{example}	
	The leading term of the third variation of the energy for a typical quasi-linear equation, 
	e.g., the minimal surface energy for graphs $\Energy(u)=\int_{\Omega}\sqrt{1+|Du|^{2}}\;dx$, has the form
	\begin{align*}
		|\delta^{3}\Energy(v)(U,V,V)|\leq C\;\left(\int_{\Omega}\left(|U||D V|^{2}+|D U||D V||V|\right)^{\frac{q}{q-1}}\;dx\right)^{1-\frac{1}{q}}.
	\end{align*}
	For $d< 4$ such an energy is predominantly quadratic with respect to $q=\infty$, but not in general.
	\end{example}

\section{Galerkin Orthogonality and the Adjoint Problem}
%
We consider the variational formulations of the problems \eqref{eq:Pcont} and \eqref{eq:Pdisc}
\begin{alignat}{5}
u&\in H:&\qquad \delta\Energy(u)(V)&=0\qquad &\forall V&\in W_{0}^{1,2}(\Omega,\RR^{n}),\label{eq:PcontVar}\\
u_{h}&\in S_{h;\phi}^{m}:&\qquad \delta\Energy(u_{h})(V_{h})&=0\qquad &\forall V_{h}&\in S_{h;0}^{m}. \label{eq:PdiscVar}
\end{alignat}
These correspond to~\eqref{eq:linP} and~\eqref{eq:linPh} in the linear setting. 
By inserting a discrete test function into~\eqref{eq:PcontVar} we obtain by the fundamental theorem of calculus (replacing subtraction in the linear setting)
\begin{align}\label{eq:galerkin}
0=  \delta\Energy(u_{h})(V_{h})- \delta\Energy(u)(V_{h})=\int_{0}^{1}\delta^{2}\Energy(\Gamma(t))(V_{h},u_{h}-u)\;dt,
\end{align}
where $\Gamma(t)=(1-t)u+tu_{h}$. This is a nonlinear generalization of Galerkin orthogonality. Note that for a quadratic energy $\delta^{2}\Energy$ is independent of the function $\Gamma(t)$ and we recover the standard notion of Galerkin orthogonality~\eqref{eq:galerkinLin}. 

We now define a nonlinear generalization of the adjoint problem \eqref{eq:adjproblin} featuring in the Aubin--Nitsche trick.
For nonlinear energies the adjoint problem is essentially a linearization of problem~\eqref{eq:PcontVar} \cite{Rannacher}
with a right hand side that is given by the difference of the solutions $u$ and $u_{h}$ to~\eqref{eq:PcontVar} and~\eqref{eq:PdiscVar}, respectively:
\begin{align}\label{eq:deformedContVec}
W\in W_{0}^{1,2}(\Omega,\RR^{n})\ :\ \delta^{2}\Energy(u)(W,V) =-(V,u_{h}-u)_{L^{2}}\qquad \forall V\in W_{0}^{1,2}(\Omega,\RR^{n}).
\end{align}
Note that as long as the operator $\partial_{p}^{2}L(x)$ is in $W^{1,q}$, $\partial_{p}\partial_{z}L(Du,u,x)$ is in $L^{q}$, $\partial_{z}^{2}L(Du,u,x)$ is in $L^{\frac{\max\{q,4\}}{2}}$, and $u_{h}-u$ is in $L^{2}$ for $q>\max\{2,d\}$, standard regularity results for linear elliptic systems \cite{lady} yield that the adjoint problem is $H^{2}$-regular, i.e., that the solution $W$ fulfills
\begin{align}\label{eq:H2reg}
\|W\|_{W^{2,2}(\Omega,\RR^{n})}\leq C\;\|u-u_{h}\|_{L^{2}(\Omega,\RR^{n})}.
\end{align}
\section{$L^{2}$-Error Estimate}
We will now combine the nonlinear Galerkin orthogonality~\eqref{eq:galerkin} with the standard estimate for $H^1$-elliptic energies~\eqref{eq:H1err} to show that a higher order estimate for the $L^2$-error for predominantly quadratic energies can be obtained by the Aubin--Nitsche trick analogous to the linear setting described in Section~\ref{sec:lin}.
\begin{theorem}\label{T:L2err}
Let $m\in \NN$, and $2(m+1)>d$.
Assume that $u\in W_{\phi}^{m+1,2}(\Omega,\RR^{n})$ is a minimizer of an elliptic energy $\Energy$ that is predominantly quadratic with respect to $q>\max\{2,d\}$ with $q\leq 2d$ if $d-2m=1$.
Let $u_{h}$ be a (local) minimizer of $\Energy$ in $S_{h;\phi}^{m}$ fulfilling~\eqref{eq:H1err}.
Finally, suppose that the adjoint problem~\eqref{eq:deformedContVec} is $H^{2}$-regular, i.e., that its solution $W$ fulfills \eqref{eq:H2reg}.
	Then there exists a constant $\Cl{c:L2const}$, such that
	\begin{align}
	\|u-u_{h}\|_{L^{2}(\Omega,\RR^{n})}\leq \Cr{c:L2const}\;h^{m+1},
	\end{align}
	where $\Cr{c:L2const}$ depends nonlinearily on $\|u\|_{W^{1,q}}$ and $|u|_{W^{m+1,2}}$.
\end{theorem}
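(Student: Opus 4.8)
The plan is to mirror the linear Aubin--Nitsche argument of Section~\ref{sec:lin}, replacing subtraction by the fundamental theorem of calculus in the same way as the nonlinear Galerkin orthogonality~\eqref{eq:galerkin}. First I would test the adjoint problem~\eqref{eq:deformedContVec} with $V\colonequals u_h-u$, which gives
\begin{align*}
\|u-u_h\|_{L^2}^2 = -\,\delta^2\Energy(u)(W,u_h-u).
\end{align*}
Letting $W_I\in S_{h;0}^m$ be the interpolant of $W$ from Condition~\ref{cond:1}, the nonlinear Galerkin orthogonality~\eqref{eq:galerkin} with test function $V_h=W_I$ states that $0=\int_0^1\delta^2\Energy(\Gamma(t))(W_I,u_h-u)\,dt$. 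Adding this to the identity above and splitting $W=(W-W_I)+W_I$ yields the decomposition
\begin{align*}
\|u-u_h\|_{L^2}^2 = \underbrace{-\,\delta^2\Energy(u)(W-W_I,u_h-u)}_{(\mathrm{I})} + \underbrace{\int_0^1\big[\delta^2\Energy(\Gamma(t))-\delta^2\Energy(u)\big](W_I,u_h-u)\,dt}_{(\mathrm{II})},
\end{align*}
where $(\mathrm{I})$ is the exact analogue of the linear Aubin--Nitsche term, while $(\mathrm{II})$ is a genuinely nonlinear correction that vanishes for quadratic energies.

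For term $(\mathrm{I})$ I would argue exactly as in the linear case. The $W^{1,2}$-boundedness~\eqref{eq:ellipticabove} gives $|(\mathrm{I})|\le\Lambda\|W-W_I\|_{W^{1,2}}\|u_h-u\|_{W^{1,2}}$; the interpolation estimate~\eqref{eq:cond1a} (with $k=2$) together with the $H^2$-regularity~\eqref{eq:H2reg} bounds $\|W-W_I\|_{W^{1,2}}\le C\,h\,\|W\|_{W^{2,2}}\le C\,h\,\|u-u_h\|_{L^2}$, and the $H^1$-error estimate~\eqref{eq:H1err} bounds $\|u_h-u\|_{W^{1,2}}\le C\,h^m|u|_{W^{m+1,2}}$. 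Hence $|(\mathrm{I})|\le C\,h^{m+1}|u|_{W^{m+1,2}}\|u-u_h\|_{L^2}$, which is already of the desired order.

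The core of the argument, and the main obstacle, is term $(\mathrm{II})$. Applying the fundamental theorem of calculus once more along the segment $\tau\mapsto u+\tau(u_h-u)$ produces a third variation,
\begin{align*}
(\mathrm{II}) = \int_0^1\!\!\int_0^t \delta^3\Energy\big(u+\tau(u_h-u)\big)(u_h-u,\,W_I,\,u_h-u)\,d\tau\,dt,
\end{align*}
so that the predominantly quadratic bound~\eqref{eq:almostLinear} applies with $U=W_I$ and $V=u_h-u$ (using the symmetry of $\delta^3\Energy$). This bounds the integrand by $C\,\|W_I\|_{W^{2,2}}\|u_h-u\|_{W^{1,2}}\|u_h-u\|_{W^{o,r}}$. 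Here $\|W_I\|_{W^{2,2}}\le C\|W\|_{W^{2,2}}\le C\|u-u_h\|_{L^2}$ by interpolation stability~\eqref{eq:cond1b} and~\eqref{eq:H2reg}, and $\|u_h-u\|_{W^{1,2}}\le C\,h^m|u|_{W^{m+1,2}}$ by~\eqref{eq:H1err}; it then remains to gain one further power of $h$ from $\|u_h-u\|_{W^{o,r}}$, using the choice $(o,r)=(1,2)$ (which yields $\|u_h-u\|_{W^{1,2}}\le C h^m\le Ch$ for $m\ge1$) or the choice $o=0$, $r\le d$ together with a Sobolev embedding in the remaining regimes.

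The delicate point is that~\eqref{eq:almostLinear} holds with a constant depending on the $W^{1,q}$-norm of the base point, so I must verify that every $u+\tau(u_h-u)$, $\tau\in[0,1]$, lies in $W^{1,q}$ with a bound uniform in $h$ and $\tau$. For $u$ this follows from the Sobolev embedding $W^{m+1,2}\hookrightarrow W^{1,q}$, which is precisely where the hypotheses $2(m+1)>d$ and $q\le 2d$ when $d-2m=1$ enter; for $u_h$ I would combine the inverse estimate of Condition~\ref{cond:inv} with~\eqref{eq:H1err} to control $\|u_h-u\|_{W^{1,q}}$ by a nonnegative power of $h$. Keeping this bookkeeping of $m$, $d$ and $q$ consistent across the embedding, the inverse estimate, and the choice of $(o,r)$ is the real work; it yields $|(\mathrm{II})|\le C\,h^{m+1}\|u-u_h\|_{L^2}$ with $C$ depending nonlinearly on $\|u\|_{W^{1,q}}$ and on $|u|_{W^{m+1,2}}$. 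Combining the two bounds gives $\|u-u_h\|_{L^2}^2\le C h^{m+1}\|u-u_h\|_{L^2}$, and dividing by $\|u-u_h\|_{L^2}$ (the estimate being trivial otherwise) completes the proof.
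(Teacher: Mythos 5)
Your proposal is correct and follows essentially the same route as the paper: test the adjoint problem with $u_h-u$, invoke the nonlinear Galerkin orthogonality~\eqref{eq:galerkin}, estimate an elliptic $\delta^2$-term involving $W-W_I$ to order $h^{m+1}$, and control a $\delta^3$-term via the predominantly quadratic bound together with the inverse-estimate control of $\|\Gamma\|_{W^{1,q}}$ and the extra power of $h$ extracted from $\|u_h-u\|_{W^{o,r}}$. The only difference is organizational: you split $W=(W-W_I)+W_I$ first and apply the fundamental theorem of calculus to $\delta^2\Energy(\Gamma(t))-\delta^2\Energy(u)$, whereas the paper differentiates once along the combined segment $s\mapsto\bigl(\Gamma(s),\,sW_I+(t-s)W\bigr)$; both produce the same two terms with the same estimates.
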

\begin{proof}
	We insert $V\colonequals u_{h}-u$ into \eqref{eq:deformedContVec}, and obtain
	\begin{align*}
	\|u-u_{h}\|_{L^{2}}^{2}&= -\delta^{2}\Energy(u)(W,u_{h}-u),
	\end{align*}
	where $W\in W_{0}^{2,2}(\Omega,\RR^{n})$ is the solution of \eqref{eq:deformedContVec}.
	Let $W_{I}\in S_{h;0}^{m}$ be an approximation of $W$ in the sense of Condition~\ref{cond:1}.
	As $u_{h}$ is a local minimizer in $S_{h;\phi}^{m}$, generalized Galerkin orthogonality \eqref{eq:galerkin} holds, so that for $\Gamma(t)=(1-t)u+tu_{h}$
	\begin{align}
		\|u_{h}-u\|_{L^{2}(\Omega,\RR^{n})}^{2} &=  - \delta^{2}\Energy(u)(W,u_{h}-u) + \int_{0}^{1} \delta^{2} \Energy(\Gamma(t))\left(W_{I},u_{h}-u \right)\;dt \nonumber \\
	&=  \int_{0}^{1}\int_{0}^{t} \frac{1}{t}\frac{d}{ds} \delta^{2} \Energy(\Gamma(s))\left(s W_{I} + \left(t-s\right)W,u_{h}-u\right)\;ds\;dt \nonumber \\
	&=  \int_{0}^{1}\int_{0}^{t} \frac{1}{t} \delta^{3} \Energy(\Gamma(s))\left(s W_{I}+\left(t-s\right)W,u_{h}-u,u_{h}-u\right)\;ds\;dt \nonumber \\
	&\qquad  +  \int_{0}^{1}\int_{0}^{t} \frac{1}{t} \delta^{2} \Energy(\Gamma(s))(W_{I}- W,u_{h}-u)\;ds\;dt. \label{eq:dummieGalerkin}
	\end{align}
	The second integral in \eqref{eq:dummieGalerkin}
	is estimated using the ellipticity assumption~\eqref{eq:ellipticabove}:
	\begin{align*}
	 \int_{0}^{1}\int_{0}^{t} \frac{1}{t} \delta^{2} \Energy(\Gamma(s))(W_{I}- W,u_{h}-u)\;ds\;dt
	\leq \Lambda \|W_{I}- W\|_{W^{1,2}(\Omega,\RR^{n})} \|u_{h}-u\|_{W^{1,2}(\Omega,\RR^{n})}.
	\end{align*}
	Using Condition~\ref{cond:1} on $W_{I}$, the $H^{1}$-error bound \eqref{eq:H1err}, and the $H^{2}$-regularity  \eqref{eq:H2reg}, we obtain
	\begin{align*}
	 \int_{0}^{1}\int_{0}^{t} \frac{1}{t} \delta^{2} \Energy(\Gamma(s))(W_{I}- W,u_{h}-u)\;ds\;dt
	 &\leq C\;h^{m+1} |W|_{W^{2,2}(\Omega,\RR^{n})} \|u\|_{W^{m+1,2}(\Omega,\RR^{n})}\\
	 &\leq C\;h^{m+1}  \|u\|_{W^{m+1,2}(\Omega,\RR^{n})}\|u_{h}-u\|_{L^{2}(\Omega,\RR^{n})}.
	\end{align*}
	In order to estimate the first integral term in \eqref{eq:dummieGalerkin} we use that $\Energy$ is predominantly quadratic
	\begin{multline*}
	|\delta^{3} \Energy(\Gamma(s))(s W_{I}+(t-s)W,u_{h}-u,u_{h}-u)|\\
	\begin{aligned}
	&\leq C(\|\Gamma(s)\|_{W^{1,q}}) \|u_{h}-u\|_{W^{1,2}}\|u_{h}-u\|_{W^{o,r}} \left(s \|W_{I}\|_{W^{2,2}} + (t-s)\|W\|_{W^{2,2}}\right).
	\end{aligned}
	\end{multline*}
	Using again Condition~\ref{cond:1} on $W_{I}$, the $H^{1}$-error bound, and the $H^{2}$-regularity, we obtain
	\begin{align*}
	\int_{0}^{1}\int_{0}^{t} &\frac{1}{t}  \delta^{3} \Energy(\Gamma(s))\left(s W_{I}+\left(1-s\right)W,u_{h}-u,u_{h}-u\right)\;ds\;dt \\
	&\leq C\int_{0}^{1}\int_{0}^{t}  C(\|\Gamma(s)\|_{W^{1,q}})\;ds\;dt  \|u_{h}-u\|_{W^{1,2}}\|u_{h}-u\|_{W^{o,r}} \|u_{h}-u\|_{L^{2}}\\
	& \leq C(\max\{\|u\|_{W^{1,q}},\|u_{h}\|_{W^{1,q}}\})h^{m}  |u|_{W^{m+1,2}} \|u_{h}-u\|_{W^{o,r}} \|u_{h}-u\|_{L^{2}}.
	\end{align*}
	Note that we can assume that
	\begin{align*}
	\|u_{h}\|_{W^{1,q}}\leq C\;\left(\|u\|_{W^{1,q}}+|u|_{W^{m+1,2}}\right)
	\end{align*}
	for $h$ small enough, as we can use the inverse estimate in Condition~\ref{cond:inv} on $(u_{h}-u_{I})\in S_{h;0}^{m}$, Condition~\ref{cond:1} on $u_{I}$, and the $H^1$-error bound to estimate
	\begin{align*}
	\|u_{h}\|_{W^{1,q}}&\leq \|u_{h} - u_{I}\|_{W^{1,q}}+ \|u_{I}\|_{W^{1,q}}\\
	&\leq h^{-d(\frac{1}{2}-\frac{1}{q})}\|u_{h} - u_{I}\|_{W^{1,2}}+ C\|u\|_{W^{1,q}}\\
	&\leq C h^{m-d(\frac{1}{2}-\frac{1}{q})}|u|_{W^{m+1,2}}+ C\|u\|_{W^{1,q}}.
	\end{align*}
	Note that $m-d(\frac{1}{2}-\frac{1}{q})\geq 0$, if we assume $q\leq 2d$ in the case $d-2m=1$.
	If $o=1$ and $r=2$,
	then we can use the $H^{1}$-error bound again to obtain
	\begin{multline*}
	\int_{0}^{1}\int_{0}^{t} \frac{1}{t}\int_{0}^{1}  \delta^{3} \Energy(\Gamma(s))\left(s W_{I}+\left(1-s\right)W,u_{h}-u,u_{h}-u\right)\;ds\;dt\\
	\leq 
	C(\|u\|_{W^{1,q}},|u|_{W^{m+1,2}})h^{2m} \|u_{h}-u\|_{L^{2}}
	\end{multline*}
	with $2m\geq m+1$.
	
	If instead $o=0$ and $r\leq d$, then either we are in the same situation as before, or $d\geq4$ and $\frac{2d}{d-2}\leq r\leq d$. In that case $L^{p}$-interpolation with $\epsilon=h$ yields
	\begin{align*}
	\|u_{h}-u\|_{L^{r}} &\leq h\; \|u_{h}-u\|_{L^{\infty}} + h^{1-\frac{r(d-2)}{2d}}\|u_{h}-u\|_{L^{\frac{2d}{d-2}}}\\
	&\leq  h \;\left( \|u\|_{L^{\infty}} +\|u_{h}\|_{L^{\infty}}\right) + C\; h^{m+1-\frac{r(d-2)}{2d}} |u|_{W^{m+1,2}}.
	\end{align*}
	As $(m+1)\geq \frac{d}{2}$ and $d\geq r$, we have $m+1-\frac{r(d-2)}{2d} \geq 1$.
	Thus, we obtain also for this case
	\begin{multline*}
	\int_{0}^{1}\int_{0}^{t} \frac{1}{t}\int_{0}^{1}  \delta^{3} \Energy(\Gamma(s))\left(s W_{I}+\left(1-s\right)W,u_{h}-u,u_{h}-u\right)\;ds\;dt\\
	\leq 	C(\|u\|_{W^{1,q}},|u|_{W^{m+1,2}})\;h^{m+1}\|u_{h}-u\|_{L^{2}}.
	\end{multline*}
	This yields the assertion.
\end{proof}

\bibliographystyle{alpha}     
\bibliography{literatur}

\end{document}